\begin{document}


\title{The Vlasov-Navier-Stokes equations as a mean field limit}

\author{Franco Flandoli\footnote{franco.flandoli@sns.it. Scuola Normale Superiore of Pisa, Italy.}, Marta Leocata\footnote{leocata@mail.dm.unipi.it. University of Pisa, Italy.}, Cristiano Ricci\footnote{cristiano.ricci@unifi.it. University of Florence, Italy.}}
\maketitle

\begin{abstract}
Convergence of particle systems to the Vlasov-Navier-Stokes equations is a
difficult topic with only fragmentary results. Under a suitable modification
of the classical Stokes drag force interaction, here a partial result in this
direction is proven. A particle system is introduced, its interaction with the
fluid is modelled and tightness is proved, in a suitable topology, for the
family of laws of the pair composed by solution of Navier-Stokes equations and
empirical measure of the particles. Moreover, it is proved that every limit
law is supported on weak solutions of the Vlasov-Navier-Stokes system. Open
problems, like weak-strong uniqueness for this system and its relevance for
the convergence of the particle system, are outlined. 
\end{abstract}

\section{Introduction}
\let\thefootnote\relax\footnotetext{ \textbf{MSC 2010}:  35J05; 76T20; 60K35}

The Vlasov-Navier-Stokes (VNS) system is a mathematical model for a large
family of particles immersed in a fluid, the two objects interacting in a
simple but relevant way. We consider the following generalized form of VNS
system%
\begin{equation}\label{eq::NS-FP}
\left\{
\begin{aligned}
\frac{\partial u}{\partial t}&=\Delta u-u\cdot \nabla u-\nabla \pi- \int_{\mathbb{R}^d}g\left(u(t,x),v\right)F(x,v)\,dv \quad \div (u) = 0,\\
\frac{\partial F}{\partial t}&+v\cdot \nabla_x F+\div_v(g(u,v)F)=\frac{\sigma^2}{2}\Delta_v F,  \\
 u(0&,\cdot) = u_0,\quad F(0,\cdot,\cdot)= F_0,
\end{aligned}\right.
\end{equation}
where $u$ is the velocity of a 2D fluid, that for sake of simplicity we assume
to live in the torus $\Pi^{2}=\mathbb{R}^{2}/\mathbb{Z}^{2}$, $\pi$ is
the fluid pressure, $F$ is the density of particles immersed in the fluid, the
kinematic viscosity of the fluid is settled to one for simplicity of notations and the
diffusion coefficient of particles is $\sigma$.

The key issue of Vlasov-Navier-Stokes system is the interaction between
particles and fluid, represented by the forcing term $-\int_{\mathbb{R}^{d}%
}g\left(  u(t,x),v\right)  F(x,v)\,dv$ in the Navier-Stokes equations and by
the transport term $\operatorname{div}(g(u,v)F)$ in Vlasov equation. The
classical interaction is the so called Stokes drag force, modelled by
\[
g\left(  u,v\right)  =u-v
\]
up to constants. In this case the PDE system has been uniquely solved in
dimension 2, in a class of relatively smooth solutions, see \cite{Yu} and
references therein. For technical reasons we are not able to incude this case
in our main result on particle approximations and thus we consider a general
bounded function $g$ satisfying certain assumptions. This is a limitation that
we hope to overcome in future research. Boundedness of $g$ may be vaguely
justified on a physical ground by invoking a saturation effect of the
interaction when the difference in velocity between particles and fluid is too
large. In fact, a posteriori, the VNS system has a unique bounded solution
$\left(  u,F\right)  $, as proved by \cite{Yu}, and thus $g$ can be
elimiminated by the PDE system;\ but we cannot eliminate it from the particle
system below and at present we do not still know that the solutions of the
PDE\ system obtained as a limit of particles are bounded. Hence we have to
maintain $g$ in our presentation.

The shape of the PDE system clearly reminds a mean field model but, in spite
of several investigations, it seems very difficult to prove a full theorem.
Various particle models have been proposed, with several limitations; see for
instance \cite{Allaire}, \cite{Bernard}, \cite{Desvill}, \cite{Jabin} (see
also \cite{Desv PDE}, \cite{Goudon1}, \cite{Goudon2} for convergence of PDE to
PDE results, although motivated by particle arguments). Among the difficulties
discussed in these works there is the precise description of the interaction
between particles and fluid, which is a difficult topic in itself, see for
instance \cite{Bernard}, \cite{Dejard}, \cite{Fer}, \cite{gerard},
\cite{Sueur}. 

The present work is a continuation of \cite{Fla}; in the spirit of that work,
with the modification $g\left(  u,v\right)  $ introduced here, the particle
system has the form%

\begin{equation*}
\frac{\partial u^N}{\partial t}=\Delta u^N-u^N\cdot \nabla u^N-\nabla \pi^N-\frac{1}{N}\sum_{i=1}^{N} g\left(u^{N}_{\eps_{N}}(t,X^{i,N}_{t}),V^{i}_{t}\right)\delta^{\eps_{N}}_{X^{i,N}_{t}}
\end{equation*}

\begin{equation*}
\begin{cases}
dX^{i,N}_{t}=V^{i,N}_{t}dt\\
d V^{{i,N}}_{t}= g\left(u^{N}_{\eps_{N}}\left(t,X^{i,N}_{t}\right),V^{i,N}_{t}\right)dt+\sigma dW^{i}_{t}
\end{cases}
\end{equation*}

where $N$ is the number of particles, $X_{t}^{i,N}$ and $V_{t}^{i,N}$ are
position and velocity of particle $i=1,...,N$. We emphasize that, in the case treated here, it is necessary for technical reasons to introduce additive noise to system. 
In \cite{Fla}, a two-step approach is discussed in some detail: the parameters
$N$ = number of particles and $\epsilon$ = smoothing parameter of the
interaction (playing the role in a sense of the size of interaction of the
particle)\ are decoupled; first the limit as $N\rightarrow\infty$ is taken,
leading to a non-local PDE system; then the limit as $\epsilon\rightarrow0$ is
taken. Although being a first result, this is not the right way to understand
the link between the particle system and the PDE system. 

In the present work we couple the parameters $N$ and $\epsilon$ by taking a
sequence $\epsilon_{N}\rightarrow0$ as $N\rightarrow\infty$. The joint limit
introduces great difficulties. We present a way to overcome some of them under
the above mentioned additional assumptions on the interaction function $g$. We
prove that the particle system has subsequences which converge in law to
solutions of the PDE system. In order to prove that the full sequence
converges we should be able to prove uniqueness of solutions to the PDE system
in the class of solutions obtained as a limit of particles, but this class is
too weak a priori and we still do not know whether uniqueness holds. For
classes of more regular solutions, in particular with bounded density $F$,
uniqueness is known, see \cite{Yu}; and such solutions exists. Hence\ a
weak-strong uniqueness theorem would be sufficient, proving that our weaker
solutions coincide with the stronger ones provided by \cite{Yu}. We hope to
overcome these technical problems in future research.

\maketitle 

\section{Notations and Main Results}\label{sec::intro}
We introduce the microscopic  description of the PDE system \eqref{eq::NS-FP}. Particle system is described by the following dynamic
\begin{equation}\label{eq::particles}
\begin{cases}
dX^{i,N}_{t}=V^{i,N}_{t}dt\\
d V^{{i,N}}_{t}= g\left(u^{N}_{\eps_{N}}\left(t,X^{i,N}_{t}\right),V^{i,N}_{t}\right)dt+\sigma dW^{i}_{t}
\end{cases}
\end{equation}
\noindent with initial conditions $\left(X^{i,N}_0,V^{i,N}_0\right)\sim F_0\,dx\,dv$ i.i.d and $\mathcal{F}_0$-measurable, where $W^i$ is a sequence of independent Brownian motions on a filtered space $(\Omega, \mathcal{F},\mathcal{F}_t,\PP)$, and $\sigma\in \RR$ is the diffusion coefficient. Velocity of each particle is driven by the saturated stokes-drag force $g\in {C}^{1}_{b}(\Pi^{d}\times \RR^{d};\RR^{d})$. In our microscopic description the velocity interaction is computed between the particle velocity $V^i_t$ and a local average at particle center $X^i_t$ of the fluid velocity $u^N_{\eps_N}$:
\[
u^{N}_{\eps_{N}}(t,x)=(\theta^{0,\eps_{N}}*u^{N}_t)(x)
\]
where $\theta^{0,\eps_{N}}$ is a suquence of rescaled mollifier, described at the end of this section in \eqref{prop::theta0}-\eqref{eq:thetarescaled}.
At the microscopic level the fluid dynamic is given by the Naviers-Stokes equation, but forced by the presence of a discrete number of particles $N$.
\begin{equation}\label{eq::NSdiscrete}
\frac{\partial u^N}{\partial t}=\Delta u^N-u^N\cdot \nabla u^N-\nabla \pi^N-\frac{1}{N}\sum_{i=1}^{N} g\left(u^{N}_{\eps_{N}}(t,X^{i,N}_{t}),V^{i}_{t}\right)\delta^{\eps_{N}}_{X^{i,N}_{t}}
\end{equation}
with initial conditions $u^N_0$ such that $u^N_0\to u_0$ in $L^1(\Omega,L^2_w(\Pi^d))$. Again we adopt a sort of phenomenological
description, where particles act on the fluid as Dirac forces, with intensity dependent on the relative velocity between fluid and particle. For technical reasons, but also as a trace of the fact that particles occupy a volume, we use a smoothed version of Dirac to describe the force
$$\delta^{\eps_{N}}_{X^{i,N}_{t}}=\theta^{0,\eps_{N}}\left(x-X^{i,N}_{t}\right).$$

\noindent Setting 
\[
\mathcal{E}_t^{N}=\frac{1}{2}\int_{\Pi^{d}}\abs{u^{N}\left(t,x\right)}^{2}dx+\frac{1}{2N}\sum_{i=1}^{N}\abs{V^{i,N}_{t}}^{2}
\]
the total energy of the system and by applying It\^o formula, if $u^{N}$ is a regular solution then we have:

\begin{multline*}
d\mathcal{E}_{t}^{N}+  \left(\frac{1}{N}\sum_{i=1}^{N}g\left(  u^{N}_{\eps_{N}}\left(X^{i,N}_{t}\right), V^{i,N}_{t} \right)\left(u^{N}_{\eps_{N}}\left(X^{i,N}_{t}\right)- V^{i,N}_{t}\right)\right. \\+ \left. \int_{\Pi^{d}}\abs{\nabla u^{N}\left(t,x\right)}^{2}dx  \right)\, dt= 
\frac{d \cdot \sigma^{2}}{2}\,dt+\frac{\sigma}{N}\sum_{i=1}^{N}V^{i,N}_{t}dW^{i}_{t}.
\end{multline*}

The last identity is a consequence of the particular choice of the interaction between particles and the fluid velocity field.  Furthermore, if $g$ is such that $g(x,v)(x-v)\geq 0$ we obtain the traditional energy balance proper of the Vlasov-Navier-Stokes framework.

\begin{oss}\label{lem::energy}
In the classical case when $g\left(  u,v\right)
=u-v$, the term with 
$$\left\vert u_{\varepsilon_{N}}^{N}\left(  X_{t}
^{i,N}\right)  -V_{t}^{i,N}\right\vert ^{2}$$ on the right-hand-side has the
meaning of dissipation of energy due to Stokes drag force. Here, with the
function $g\left(  u,v\right)  $, it is natural to assume the condition
$g\left(  u,v\right)  \left(  u-v\right)  \geq0$ to preserve the
interpretation of a dissipation. However, thanks to the boundedness of $g$, we
do not need this condition in the mathematical analysis. Nonetheless we preferred to specify the hypothesis that guarantee the energy balance in order to keep the physical coherence of the system.
\end{oss}

For technical reasons (see Lemma \ref{lem:FNL4}), the mollifiers are chosen as 
$$\theta\left(x,v\right)=\theta^{0}\left(x\right)\theta^{1}\left(v\right)$$
with the following properties
	\begin{equation}\label{prop::theta0}
	\abs{\nabla\theta^{0}(x) }\leq \theta^{0}(x);
	\end{equation}

	\begin{equation}\label{prop::theta1}
	\text{supp}(\theta^{1}(v))\subseteq B(0,1).
	\end{equation}
	
\noindent and with the following rescaling:
\begin{equation}
\label{eq:thetarescaled}
\theta^{\eps_{N}}\left(x,v\right)=\eps_{N}^{-d}\theta\left(\eps_{N}^{-1}\left(x,v\right))\right)\quad \textrm{with}\quad \eps_{N}=N^{-\frac{\beta}{d}}\quad \beta<\frac{d}{3d+2}
\end{equation}
Denote by $\PP_{r_{1}}(\Pi^{d}\times\RR^{d})$ the set of probability measure on $\Pi^{d}\times \RR^{d}$ with finite first order moment. We also denote by $\mathcal{W}_{1}$ the usual Wasserstein-$1$ distance on such space. 
In this manuscript we denote with $S^N_t$ the empirical measure:
\[S^N_t=\frac{1}{N}\sum_{i=1}^N \delta_{\left(X^{i,N}_t,V^{i,N}_t\right)}.\]
With this notation we have that the measure $S^{N}_{t}$ belongs to $L^{1}(\Omega; C([0,T];\PP_{r_{1}}(\Pi^{d}\times \RR^{d})))$.
We are now able to state our main result.



\begin{teo}\label{teo::main}
Assuming that  $S^N_0$ converges to $F_0$ weakly in $L^1(\Omega)$ and $u^N_0$ converges to $u_0$ weakly in $L^1(\Omega)$, the family of laws $\{Q^N\}_{N \in \NN}$  of the couple $(u^{N}, S^{N})_{N \in \NN}$ is tight on $L^{2}([0,T];C(\Pi^{d}))\times C([0,T];\PP_{r_{1}}(\Pi^{d}\times\RR^{d}))$. 
Moreover any measure $Q$ limit of a subsequence $\{Q^{N_{k}}\}$ of $Q^{N}$, is supported on the weak solutions of system \eqref{eq::NS-FP} (see Definition \ref{defi::weaksolution}).

\end{teo}
A straightforward consequence of the previous result is the existence of weak solutions to \eqref{eq::NS-FP}, of class $L^{2}([0,T];C(\Pi^{d}))\times C([0,T];\PP_{r_{1}}(\Pi^{d}\times\RR^{d}))$.

\section{A priori Estimates}
\noindent 
In the following we will always assume $d=2,3$. In the next lemma, we state the identity satisfied by the empirical measure.
\begin{lem}
For every test function $\phi : \Pi^{d}\times \RR^{d} \to \RR$, $S^{N}_{t}$ satisfies the following identity:
\begin{eqnarray}\label{eq::empiricalmeasure}
d\langle S^{N}_{t},\phi \rangle = \langle  S^{N}_{t},v\cdot \nabla_{x}\phi\rangle\,dt +  \langle S^{N}_{t}, g(u^{N}_{\eps_{N}}(t,x),v)\cdot \nabla_{v}\phi  \rangle\,dt  \nonumber \\
+\frac{\sigma^{2}}{2}\langle S^{N}_{t}, \Delta_{v}\phi\rangle\,dt
+dM^{N,\phi}_{t}
\end{eqnarray}
with $M^{N,\phi}_{t}=\frac{\sigma}{N}\sum_{i=1}^{N}\int_{0}^{t}\nabla_{v}\phi\left(X^{i,N}_{t},V^{i,N}_{t}\right)\cdot dW^{i}_{t}$. Moreover $F^{N}_{t}=\theta^{\eps_{N}}*S^{N}_{t}$ satisfies:
\begin{eqnarray}\label{eq::empiricalmeasuremollified}
dF^{N}_{t}=\frac{\sigma^{2}}{2}\Delta_{v}F^{N}_{t}-\div_{v}(\theta^{\eps_{N}}*(g(u^{N}_{\eps_{N}}(t,x),v)S^{N}_{t}))\,dt\nonumber\\ 
- \div_{x}(\theta^{\eps_{N}}*(vS^{N}_{t}))\,dt+dM_{t}^{N,\eps_{N}}\end{eqnarray}
with $M^{N,\eps_{N}}_{t}=M^{N,\theta^{\eps_{N}}\left(x-\cdot,v-\cdot\right)}_{t}$.

\end{lem}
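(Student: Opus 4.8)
The plan is to derive both identities from a single application of It\^o's formula to the particle dynamics \eqref{eq::particles}, and then to recover the mollified equation by specializing the test function.

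First I would fix a test function $\phi \in C^2_b(\Pi^d \times \RR^d)$ and apply It\^o's formula to the real-valued process $\phi(X^{i,N}_t, V^{i,N}_t)$ for each fixed $i$. Since the position equation $dX^{i,N}_t = V^{i,N}_t\,dt$ carries no diffusion, the only second-order contribution comes from the Brownian part $\sigma\,dW^i_t$ of the velocity equation, whose quadratic variation is $\sigma^2\,dt$ in each velocity component; this produces precisely the term $\frac{\sigma^2}{2}\Delta_v\phi$. Collecting the first-order drift coming from $dX^{i,N}_t$ and $dV^{i,N}_t$ yields $v\cdot\nabla_x\phi + g(u^N_{\eps_N}(t,x),v)\cdot\nabla_v\phi$ evaluated at $(X^{i,N}_t, V^{i,N}_t)$, while the stochastic term is $\sigma\,\nabla_v\phi\cdot dW^i_t$. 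Averaging over $i=1,\dots,N$ and recalling the definition of $S^N_t$ gives \eqref{eq::empiricalmeasure}, the martingale $M^{N,\phi}_t$ being exactly the empirical average of the stochastic integrals.

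For the mollified identity \eqref{eq::empiricalmeasuremollified}, I would apply \eqref{eq::empiricalmeasure} with the shifted mollifier $\phi(y,w) = \theta^{\eps_N}(x-y, v-w)$, treating $(x,v)$ as fixed parameters, so that $\langle S^N_t, \phi\rangle = F^N_t(x,v)$. The key observation is that differentiating $\theta^{\eps_N}(x-\cdot, v-\cdot)$ in the integration variables produces a sign change, $\nabla_y\phi = -(\nabla_x\theta^{\eps_N})(x-y,v-w)$ and $\nabla_w\phi = -(\nabla_v\theta^{\eps_N})(x-y,v-w)$, which after pairing against $S^N_t$ converts each transport term into the divergence of a convolution: the $v\cdot\nabla_x\phi$ term becomes $-\div_x(\theta^{\eps_N}*(vS^N_t))$ and the $g\cdot\nabla_v\phi$ term becomes $-\div_v(\theta^{\eps_N}*(g(u^N_{\eps_N},v)S^N_t))$. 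The second-order term is unaffected by the double sign change and gives $\frac{\sigma^2}{2}\Delta_v F^N_t$, while the martingale is $M^{N,\eps_N}_t = M^{N,\theta^{\eps_N}(x-\cdot,v-\cdot)}_t$ by definition. This reproduces \eqref{eq::empiricalmeasuremollified} pointwise in $(x,v)$.

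The computation itself is routine; the points requiring care are the sign bookkeeping in the convolution derivatives above and the justification that $M^{N,\phi}_t$ is a genuine martingale rather than merely a local one. The latter is not a serious obstacle here: boundedness of $g \in C^1_b$ together with the compact support of $\theta^1$ in the velocity variable (property \eqref{prop::theta1}) ensures that $\nabla_v\phi$ is bounded along the trajectories, so the integrand of each stochastic integral has finite second moment uniformly in $t \in [0,T]$. I would also note that applying It\^o's formula presupposes enough regularity of $u^N$ to make $t \mapsto u^N_{\eps_N}(t, X^{i,N}_t)$ an adapted process with locally integrable paths; since $g$ is bounded, this is all that the drift term requires, so the identity holds for the regular solutions under consideration.
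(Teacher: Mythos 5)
Your proof is correct and follows essentially the same route as the paper: It\^o's formula applied to $\phi(X^{i,N}_t,V^{i,N}_t)$, averaged over particles, and then the mollified identity obtained by taking $\phi=\theta^{\eps_N}(x-\cdot,v-\cdot)$ as the test function. The paper states this only as a two-line sketch, so your sign bookkeeping for the convolution derivatives and the integrability check for the martingale are welcome elaborations of the same argument, not a departure from it.
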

\begin{proof}
The proof of the identity is classical, it simply follows from an application of It\^{o} formula. The second identity follows from the first applying the first $\left(\ref{eq::empiricalmeasure}\right)$ with $\phi=\theta^{\eps_{N}}\left(x-\cdot,v-\cdot \right)$
\end{proof}

\noindent
To have a first overview of the main difficulties of the problem,  we start from the identity satisfied by the empirical measure: the most troublesome term is the nonlinear one
\[ \langle S^{N}_{t}, g(u^{N}_{\eps_{N}}(x),v)\cdot \nabla_{v}\phi \rangle.\]
We need to prove that the following limit holds

\[\lim_{N\to\infty} \langle S^{N}_{t}, g(u^{N}_{\eps_{N}}(x),v)\cdot \nabla_{v}\phi \rangle= \langle F_t, g(u(t),v) \cdot \nabla_{v}\phi \rangle. \]
Assuming that $S^N_t$ converges to $F_t$ weakly as probability measure, to prove the identity above, one can prove that $u^N_t$ converges to $u$ uniformly. 
Thus, by mean of Sobolev embedding, to obtain compactness criteria in $C(\Pi^d)$ we look for an a priori estimate 
in $W^{2,2}(\Pi^d)$. To do so we approach $W^{1,2}(\Pi^d)$ estimate of the vorticity $\omega^N=\nabla^\perp\cdot u^N$. We recall that the vorticity $\omega^{N}$ satisfies the following PDE:
\begin{equation}\label{eq::omegaNvorticity}
\partial_{t}\omega^{N} = \Delta \omega^{N}- u^{N}\cdot \nabla \omega^{N} - \frac{1}{N}\sum_{i=1}^{N} g\left(u^{N}_{\eps_{N}}(t,X^{i,N}_{t}),V^{i}_{t}\right)\nabla^{\perp}\cdot \delta^{\eps_{N}}_{X^{i,N}_{t}} 
\end{equation}
\subsection{Main estimates on the marginal distribution $F^{N,0}$}
Thanks to the saturation of the Stokes drag force, the proof of tightness of the laws of empirical measures $S^N_t$ is classical. The main difficult is contained in the proof of tightness of the velocity field $u^N$. Again, thanks to the saturation of the Stokes drag force, to estimate the term of interaction between particles and fluid, we just need to get estimate on the marginal distribution $F^{N,0}$ in $L^{2}$ norm. Inspired by
\cite{Boud}, we prove in lemma \ref{lem::margmomentsfinal} an inequality between the $L^{2}$ norm of the marginal and the $L^{4}$ norm of the full distribution, together with some moments on the $v$  component. 
This result is a variant of Lemma 1 of \cite{Boud}, which avoids $||F^N_t||_\infty$, since it is too difficult to estimate.
\begin{lem}
\label{lem::margmomentsfinal}
Calling $F^{N,0}_t(x)$ the marginal of the density $F^N_t(x,v)$, i.e.
\[
F^{N,0}_t(x) = \int_{\RR^d} F^N_t(x,v)\,dv,
\]
The following holds
\begin{equation}\label{ineq:marginal_total}
\int_{\Pi^{d}}F^{N,0}_{t}(x)^{2}\,dx \leq K\int_{\RR^{d}}\int_{\Pi^{d}} \abs{v}^{\frac{3d}{2}}F^{N}_{t}(x,v)\,dxdv + K'\int_{\RR^{d}}\int_{\Pi^{d}}F^{N}_{t}(x,v)^{4}\,dxdv.
\end{equation}

\end{lem}
\begin{proof}
We write for a.e. $(t,x)$:
\begin{multline*}
F_{t}^{N,0}\left( x\right)  =\int_{\left\vert v\right\vert \leq r\left(
x\right) }F_{t}^{N}\left( x,v\right) dv+\int_{\left\vert v\right\vert
>r\left( x\right) }F_{t}^{N}\left( x,v\right) dv \\
\leq C\left( \int_{\left\vert v\right\vert \leq r\left( x\right)
}F_{t}^{N}\left( x,v\right) ^{4}dv\right) ^{1/4}r\left( x\right)^{\frac{d3}{4}} +\\
\frac{1}{r\left( x\right) ^{\alpha }}\int_{\left\vert v\right\vert >r\left(
x\right) }\left\vert v\right\vert ^{\alpha }F_{t}^{N}\left( x,v\right) dv
\end{multline*}%
Passing to the $L^2$-norm and imposing that $r\left( x\right)=\left( \int_{\left\vert v\right\vert >r\left( x\right)
}\left\vert v\right\vert ^{\alpha }F_{t}^{N}\left( x,v\right) dv\right)^{1/2\alpha}$
\begin{multline*}
\int F_{t}^{N,0}\left( x\right) ^{2}dx \leq \\
\leq2C\int \left( \int_{\left\vert
v\right\vert >r\left( x\right) }\left\vert v\right\vert ^{\alpha
}F_{t}^{N}\left( x,v\right) dv\right) ^{\frac{d3}{4\alpha }}\left(
\int_{\left\vert v\right\vert \leq r\left( x\right) }F_{t}^{N}\left(
x,v\right) ^{4}dv\right) ^{1/2}dx \\
+2\int \int_{\left\vert v\right\vert >r\left( x\right) }\left\vert
v\right\vert ^{\alpha }F_{t}^{N}\left( x,v\right) dxdv.
\end{multline*}%
Letting $\alpha = \frac{d3}{2}$, and by Young inequality, we get the desired result \eqref{ineq:marginal_total}.
\end{proof}
To get the $L^2$-estimate on the marginal distribution we start by $L^4$-estimate of the density $F^N$.
\begin{lem}
\label{lem:FNL4}
If $\theta^{0}(v)$ and $\theta^{1}(v)$ satisfies conditions (\ref{prop::theta0}) and (\ref{prop::theta1}), then there exists a constant $C$, independent on $N$, such that
\[
\sup_{t\in[0,T]}\EE{\int_{\RR^{d}}\int_{\Pi^{d}}\abs{F^{N}_{t}(x,v)}^{4}\,dxdv} \leq C.
\]
\end{lem}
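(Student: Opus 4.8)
The plan is to run an energy estimate directly on the mollified empirical density $F^{N}_{t}=\theta^{\eps_N}*S^{N}_{t}$ through its stochastic evolution \eqref{eq::empiricalmeasuremollified}. Since for fixed $N$ the field $F^{N}_{t}(x,v)=\frac1N\sum_i\theta^{\eps_N}(x-X^{i,N}_{t},v-V^{i,N}_{t})$ is a finite sum of smooth mollifiers, compactly supported in $v$, it is a genuine function-valued It\^o process, so I may apply the It\^o formula to the functional $Y_{t}:=\int_{\RR^{d}}\int_{\Pi^{d}}(F^{N}_{t})^{4}\,dx\,dv$ by freezing $(x,v)$, differentiating $(F^{N}_{t}(x,v))^{4}$ and integrating (stochastic Fubini is licit here for fixed $N$). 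This produces four drift contributions — the dissipation from $\tfrac{\sigma^{2}}{2}\Delta_{v}$, the $x$-transport, the $v$-drift carrying $g$, and the It\^o correction from the quadratic variation of $M^{N,\eps_N}_{t}$ — plus a mean-zero martingale.

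I would then treat the terms as follows. First, integrating the Laplacian term by parts in $v$ gives the good dissipation $D=-6\sigma^{2}\int(F^{N})^{2}\abs{\nabla_{v}F^{N}}^{2}\,dx\,dv=:-6\sigma^{2}\mathcal{D}\le0$. Second, for the $x$-transport I split $\theta^{\eps_N}*(vS^{N}_{t})=vF^{N}+R^{x}$, with remainder $R^{x}=\frac1N\sum_i\theta^{\eps_N}(\cdot-(X^{i}_{t},V^{i}_{t}))(V^{i}_{t}-v)$; the principal part contributes $\int v\cdot\nabla_{x}((F^{N})^{4})=0$ by periodicity, while for the remainder the two mollifier properties combine perfectly: the support condition \eqref{prop::theta1} forces $\abs{V^{i}_{t}-v}\le\eps_N$ on the support of each summand, and \eqref{prop::theta0} gives $\abs{\nabla_{x}\theta^{\eps_N}}\le\eps_N^{-1}\theta^{\eps_N}$, so the factors $\eps_N$ and $\eps_N^{-1}$ cancel and $\abs{\div_{x}R^{x}}\le F^{N}$ pointwise, whence this term is bounded by $4Y$. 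Crucially this avoids producing any $\nabla_{x}F^{N}$, for which there is no dissipation available. Third, the $v$-drift uses boundedness of $g$: $\abs{\theta^{\eps_N}*(gS^{N}_{t})}\le\|g\|_{\infty}F^{N}$, so after integrating by parts and applying Cauchy--Schwarz and Young it is dominated by $3\sigma^{2}\mathcal{D}+C\|g\|_{\infty}^{2}\sigma^{-2}Y$, the first half absorbed into $D$.

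The hard part, and the point where the coupling $\eps_N=N^{-\beta/d}$ is actually used, is the It\^o correction $I=\frac{6\sigma^{2}}{N^{2}}\sum_i\int(F^{N})^{2}\abs{\nabla_{v}\theta^{\eps_N}(\cdot-(X^{i}_{t},V^{i}_{t}))}^{2}\,dx\,dv$. Estimating it by Cauchy--Schwarz against $(\int(F^{N})^{4})^{1/2}$ together with the scaling $\|\nabla_{v}\theta^{\eps_N}\|_{L^{4}}^{2}\sim\eps_N^{-3d-2}$ yields $I\le C\,N^{-1}\eps_N^{-3d-2}\,Y^{1/2}$; since $N^{-1}\eps_N^{-3d-2}=N^{(\beta(3d+2)-d)/d}$, the hypothesis $\beta<\frac{d}{3d+2}$ is exactly what keeps this prefactor bounded (indeed vanishing). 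Collecting all four terms, taking expectations to kill the martingale, discarding the leftover $-3\sigma^{2}\mathcal{D}\le0$ and applying Young to the $Y^{1/2}$ contributions gives a closed inequality $\frac{d}{dt}\EE{Y}\le C_{1}\EE{Y}+C_{2}$ with $C_{1},C_{2}$ independent of $N$.

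The final ingredient is a uniform bound on the initial datum $\EE{Y_{0}}=\EE{\int_{\RR^{d}}\int_{\Pi^{d}}(F^{N}_{0})^{4}\,dx\,dv}$: writing $F^{N}_{0}$ as a kernel estimator of the i.i.d.\ sample distributed as $F_{0}$, the diagonal self-interaction scales like $N^{-3}\eps_N^{-6d}=N^{6\beta-3}\to0$, while the leading off-diagonal part is controlled by $\|F_{0}\|_{L^{4}}^{4}$ through Young's convolution inequality; hence $\sup_{N}\EE{Y_{0}}<\infty$ provided $F_{0}\in L^{4}$. A Gr\"onwall argument on $[0,T]$ then delivers the claimed uniform $L^{4}$ estimate. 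I expect the only genuine obstacles to be the rigorous justification of the It\^o formula for the $L^{4}$ functional — circumvented by the smoothness of $F^{N}$ at fixed $N$, or a Galerkin/Fourier truncation followed by passage to the limit — and the delicate balance in the It\^o-correction step, which is the sole reason for the precise threshold imposed on $\beta$.
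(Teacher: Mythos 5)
Your proposal is correct and follows essentially the same route as the paper's proof: It\^o's formula applied to $\int (F^N_t)^4\,dx\,dv$, the same principal-part/remainder splitting of the $x$-transport term with the $\eps_N$--$\eps_N^{-1}$ cancellation coming from \eqref{prop::theta0}--\eqref{prop::theta1}, absorption of the $g$-drift into the $v$-dissipation via Cauchy--Schwarz and Young, the same $L^4$ scaling computation for the It\^o correction that produces the threshold $\beta<\frac{d}{3d+2}$, and a Gronwall closure. Your only genuine addition is the explicit uniform bound on $\EE{\int_{\RR^d}\int_{\Pi^d} (F^N_0)^4\,dx\,dv}$ (requiring $F_0\in L^4$), a step the paper's Gronwall argument needs but leaves implicit.
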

\begin{proof}
By It\^o formula and integration by parts we have 
\begin{equation}\label{eq:lhsFN4}
\frac{1}{4}d\int_{\RR^{d}}\int_{\Pi^{d}} (F^{N}_{t})^{4}\,dxdv + \frac{3\sigma^{2}}{2}\int_{\RR^{d}}\int_{\Pi^{d}}(F^{N}_{t})^{2}\abs{\nabla_{v}F^{N}_{t}}^{2}\,dxdvdt =	
\end{equation}
\begin{equation}
\label{eq:FN4,divx}
-\int_{\RR^{d}}\int_{\Pi^{d}}(F^{N}_{t})^{3}\div_{x}(\theta^{\eps_{n}}*(vS^{N}_{t}))\,dxdvdt
\end{equation}
\begin{equation}
\label{eq:FN4,divv}
- \int_{\RR^{d}}\int_{\Pi^{d}}(F^{N}_{t})^{3}\div_{v}(\theta^{\eps_{n}}*\left(g(u^{N}_{\eps_{N}},v)S^{N}_{t}\right))\,dxdvdt
\end{equation}
\begin{equation}
\label{eq:FN4,reminder}
+ \int_{\RR^{d}}\int_{\Pi^{d}}(F^{N}_{t})^{3}\,dM^{N,\eps_{N}}_{t}\,dxdv + 3\int_{\RR^{d}}\int_{\Pi^{d}}(F^{N}_{t})^{2}\,d[M^{N,\eps_{N}}]_{t}\,dxdv.
\end{equation}
Let us deal with the terms above separately. We can rewrite the inner term in \eqref{eq:FN4,divx} as 
\[
\div_{x}(\theta^{\eps_{n}}*(vS^{N}_{t})) =  v\cdot \nabla_{x}(\theta^{\eps_{N}}*S^{N}_{t}) - ((\nabla_{x}\theta^{\eps_{N}}\cdot v)*S^{N}_{t}).
\]
The first term on the r.h.s. leads to 
\[
\int_{\RR^{d}}\int_{\Pi^{d}}(F^{N}_{t})^{3}\nabla_{x}F^{N}_{t}\cdot v\,dxdv = \frac{1}{4}\int_{\RR^{d}}\int_{\Pi^{d}}\nabla_{x}(F^{N}_{t})^{4}\cdot v\,dxdv = 0
\]
while the second, being a little bit trickier is argued below. Concerning the integrand inside the convolution, 
 by using the assumption on $\theta^{0}(x)$ and $\theta^{1}(v)$ stated conditions (\ref{prop::theta0}) and (\ref{prop::theta1}), it can be treated with the following argument
\[
 \abs{\nabla_{x}\theta^{0,\eps_{N}}(x-x')}\theta^{1,\eps_{N}}(v-v')\abs{(v-v')}
\]
\[
= \eps_{N}^{-1}\eps_{N}^{-d}\abs{\nabla_{x}\theta^{0}(\eps_{N}^{-1}(x-x'))}\eps_{N}^{-d}\theta^{1}(\eps_{N}^{-1}(v-v'))\abs{v-v'}
\]
\[
\leq \eps_{N}^{-d}\theta^{0}(\eps_{N}^{-1}(x-x'))\eps_{N}^{-d}\theta^{1}(\eps_{N}^{-1}(v-v'))\frac{\abs{v-v'}}{\eps_{N}}
\]
\[
\leq \theta^{0,\eps_{N}}(x-x')\theta^{1,\eps_{N}}(v-v')2
\]
leading to
\[
\abs{((\nabla_{x}\theta^{\eps_{N}}\cdot v)*S^{N}_{t})} \leq 2F^{N}_{t}
\]
implying 
\[
\abs{\int_{\RR^{d}}\int_{\Pi^{d}}(F^{N}_{t})^{3}\div_{x}(\theta^{\eps_{n}}*(vS^{N}_{t}))\,dxdv} \leq 2\norm{F^{N}_{t}}_{L^{4}}^{4}
\]
which ends $\eqref{eq:FN4,divx}$. 

Concerning \eqref{eq:FN4,divv}, it follows easily by the boundedness of $g$ that
\[
\abs{\div_{v}(\theta^{\eps_{N}}*g(u^{N}_{\eps_{N}},v)S^{N}_{t})}\leq \norm{g}_{L^{\infty}}\abs{\nabla_{v}F^{N}_{t}},
\]
then
\begin{multline*}
\abs{\int_{\RR^{d}}\int_{\Pi^{d}}(F^{N}_{t})^{3}\div_{v}(\theta^{\eps_{n}}*g(u^{N}_{\eps_{N}},v)S^{N}_{t})\,dxdv} \leq \\
\leq {C_{g}}\int_{\RR^{d}}\int_{\Pi^{d}}(F^{N}_{t})^{2}\abs{F^{N}_{t}\nabla_{v}F^{N}_{t}}\,dx\,dv
\\
\leq \frac{C_{g}^{2}}{\delta}\norm{F^{N}_{t}}_{L^{4}}^{4} + \delta\int_{\RR^{d}}\int_{\Pi^{d}}(F^{N}_{t})^{2}\abs{\nabla_{v}F^{N}_{t}}^{2}\,dxdv
\end{multline*}
and by choosing $\delta$ small enough we can take the second term to the l.h.s. of \eqref{eq:lhsFN4} maintaining the positivity.

Let us now deal with \eqref{eq:FN4,reminder}: the integral with respect to the martingale will vanish when taking the average, while the second term is
\begin{multline}\label{eq:FN4,qv}
\int_{\RR^{d}}\int_{\Pi^{d}}(F^{N}_{t})^{2}\,d[M^{N,\eps_{N}}]_{t}\,dxdv = \frac{\sigma^{2}}{N}\int_{\RR^{d}}\int_{\Pi^{d}} (F^{N}_{t})^{2}(\abs{\nabla_{v}\theta^{\eps_{N}}}^{2}*S^{N}_{t})\,dxdvdt\leq \\
\sigma^{4}\norm{F^{N}_{t}}_{L^{4}}^{4}dt + \frac{1}{N^{2}}\int_{\RR^{d}}\int_{\Pi^{d}}(\abs{\nabla_{v}\theta^{\eps_{N}}}^{2}*S^{N}_{t})^{2}\,dxdvdt.
\end{multline}
Notice that 
\begin{multline*}
\frac{1}{N^{2}}\int_{\RR^{d}}\int_{\Pi^{d}}(\abs{\nabla_{v}\theta^{\eps_{N}}}^{2}*S^{N}_{t})^{2}\,dxdv \leq \\ \leq \frac{1}{N^{2}}\frac{1}{N}\sum_{i=1}^{N}\int_{\RR^{d}}\abs{\nabla_{v}\theta^{1,\eps_{N}}(v-V^{i,N}_{t})}^{4}\,dv\int_{\Pi^{d}}\theta^{0,\eps_{N}}(x-X^{i,N}_{t})^{4}\,dx
\end{multline*}
and 
\[
\int_{\RR^{d}}\abs{\nabla_{v}\theta^{1,\eps_{N}}(v-V^{i,N}_{t})}^{4}\,dv = c N^{\frac{4\beta(d+1)}{d}-\beta}\]
\[ \int_{\Pi^{d}}\theta^{0,\eps_{N}}(x-X^{i,N}_{t})^{4}\,dx = c'N^{3\beta},
\]
by using the explicit form for the scaling factor $\eps_{N}$.
Substituting into \eqref{eq:FN4,qv} we get 
\[
\leq \frac{c''}{N^{3}}\sum_{i=1}^{N} N^{3\beta}\cdot N^{\frac{4\beta(d+1)}{d}-\beta} \,dt= c''N^{3\beta - \frac{4\beta(d+1)}{d}-\beta -2 }\,dt 
\]
which tend to zero for $\beta < \frac{d}{3d+2}$.

Summarizing we have obtained
\[
d\norm{F^{N}_{t}}_{L^{4}}^{4} + \left(  \frac{3\sigma^{2}}{2}-\delta\right)\int_{\RR^{d}}\int_{\Pi^{d}}(F^{N}_{t})^{2}\abs{\nabla_{v}F^{N}_{t}}^{2}\,dxdvdt \leq 
\]
\[
\leq K_{g}\norm{F^{N}_{t}}_{L^{4}}^{4}\,dt+ \int_{\RR^{d}}\int_{\Pi^{d}}(F^{N}_{t})^{3}\,dM^{N,\eps_{N}}_{t}\,dxdv + K' dt
\]
which, after taking the average, ends the proof by standard Gronwall lemma.
\end{proof}

\begin{lem}
\label{lem:FNmomentsV}
For all $\gamma > 0$ there exists a constant $C$, independent on $N$, such that
\begin{equation}
\label{eq:lemmomentsV}
\EE{ \sup_{t \in [0,T]} \int_{\RR^{d}}\int_{\Pi^{d}}\abs{v}^{\gamma}F^{N}_{t}(x,v)\,dxdv}\leq C.
\end{equation}
\end{lem}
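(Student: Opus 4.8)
The plan is to reduce the $v$-moment of the mollified density $F^N_t$ to a moment of the empirical velocities $V^{i,N}_t$, and then to control the latter using only the boundedness of the drift $g$. First I would unfold the convolution. Since $F^N_t=\theta^{\eps_N}*S^N_t=\frac1N\sum_{i=1}^N\theta^{\eps_N}(\cdot-(X^{i,N}_t,V^{i,N}_t))$, with $\theta^{\eps_N}(x,v)=\theta^{0,\eps_N}(x)\theta^{1,\eps_N}(v)$ and $\int_{\Pi^d}\theta^{0,\eps_N}=\int_{\RR^d}\theta^{1,\eps_N}=1$, Fubini gives
\[
\int_{\RR^d}\int_{\Pi^d}\abs{v}^\gamma F^N_t\,dx\,dv=\frac1N\sum_{i=1}^N\int_{\RR^d}\abs{v}^\gamma\,\theta^{1,\eps_N}(v-V^{i,N}_t)\,dv.
\]
By property \eqref{prop::theta1}, $\theta^1$ is supported in $B(0,1)$, so $\theta^{1,\eps_N}$ is supported in $B(0,\eps_N)$ and on the domain of integration $\abs{v}\leq\abs{V^{i,N}_t}+\eps_N$. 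Using $\eps_N\leq1$ and the elementary bound $(a+b)^\gamma\leq C_\gamma(a^\gamma+b^\gamma)$, the right-hand side is controlled by $\frac{C}{N}\sum_{i=1}^N(1+\abs{V^{i,N}_t}^\gamma)$.

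It then suffices to estimate $\EE{\sup_{t\le T}\frac1N\sum_{i=1}^N\abs{V^{i,N}_t}^\gamma}$, and here the saturation hypothesis $g\in C^1_b$ does all the work. Integrating the velocity equation \eqref{eq::particles} and bounding the drift by $\norm{g}_{L^\infty}$ yields the \emph{pathwise} estimate
\[
\abs{V^{i,N}_t}\leq\abs{V^{i,N}_0}+\norm{g}_{L^\infty}\,T+\sigma\sup_{s\le T}\abs{W^i_s},
\]
which is uniform in $N$ and, crucially, independent of the fluid field $u^N$ and of the complicated coupling through $u^N_{\eps_N}$. Taking $\gamma$-th powers, passing to the supremum in $t$, using $\sup_t\sum_i\leq\sum_i\sup_t$, and taking expectations term by term gives
\[
\EE{\sup_{t\le T}\frac1N\sum_{i=1}^N\abs{V^{i,N}_t}^\gamma}\leq\frac1N\sum_{i=1}^N\EE{\sup_{t\le T}\abs{V^{i,N}_t}^\gamma}\leq C_\gamma\Big(\EE{\abs{V^{1,N}_0}^\gamma}+1+\sigma^\gamma\,\EE{\sup_{s\le T}\abs{W^1_s}^\gamma}\Big).
\]
The last expectation is finite for every $\gamma>0$ (all moments of the running maximum of Brownian motion on $[0,T]$ are finite, e.g.\ by the reflection principle together with Doob's inequality), while $\EE{\abs{V^{1,N}_0}^\gamma}=\int_{\RR^d}\int_{\Pi^d}\abs{v}^\gamma F_0\,dx\,dv$ is finite by the moment assumption on $F_0$. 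Since the initial data are i.i.d., every summand equals the same finite constant and the average is bounded uniformly in $N$; combining with the first step concludes.

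I expect no serious obstacle here: this is precisely the payoff of the saturation (boundedness) assumption on $g$, which decouples the velocity growth from the fluid dynamics and turns what would be a delicate estimate in the classical Stokes case $g(u,v)=u-v$ into an elementary one. The only points requiring care are (i) the support constraint \eqref{prop::theta1} on $\theta^1$, so that mollification in $v$ does not inflate the moment, and (ii) the finiteness of the relevant $v$-moment of the initial datum $F_0$, needed to make the initial term finite; the uniformity in $N$ is then automatic from the i.i.d.\ initial law and the $N$-independent drift bound.
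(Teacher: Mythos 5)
Your proposal is correct and follows essentially the same route as the paper: expand $F^N_t = \theta^{\eps_N}*S^N_t$ as a sum, reduce the $v$-moment of the mollified density to $\gamma$-moments of the $V^{i,N}_t$ (the paper does this by the change of variables $v'=\eps_N^{-1}(v-V^{i,N}_t)$, you by the support property \eqref{prop::theta1}, which is the same computation), and then bound $\EE{\sup_{t\le T}\abs{V^{i,N}_t}^\gamma}$ pathwise from \eqref{eq::particles} using only $\norm{g}_{L^\infty}$. You merely spell out the two points the paper leaves implicit — the explicit pathwise bound via the running maximum of Brownian motion, and the need for finite $\gamma$-moments of $F_0$ in $v$ — which is a faithful completion of the same argument.
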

\begin{proof}
The thesis follows by expanding $F^{N}_{t}$ as a summation and noticing that 
\[
\eps_{N}^{-d}\int_{\RR^{d}}\abs{v}^{\gamma}\theta^{1}(\eps_{N}^{-1}(v-V^{i,N}_{t}))\,dv \stackrel{v'=\eps_{N}^{-1}(v-V^{i,N}_{t})}{=} \int_{\RR^{d}}\abs{\eps_{N}v'+V^{i,N}_{t}}^{\gamma}\theta^{1}(v')\,dv'\leq
\]
\[
\eps_{N}^{\gamma}C+\abs{V^{i,N}_{t}}^{\gamma}C'.
\]
Substituting the previous in the l.h.s. of \eqref{eq:lemmomentsV} we obtain
\[
\EE{\sup_{t \in [0,T]}\int_{\RR^{d}}\int_{\Pi^{d}}\abs{v}^{\gamma}F^{N}_{t}(x,v)\,dxdv}\leq \eps_{N}^{\gamma}C+C'\EE{\sup_{t\in [0,T]}\abs{V^{i,N}_{t}}^{\gamma}}\leq C''_{T,\gamma}
\]
where the last inequality is due to 
\[
\EE{\sup_{t\in [0,T]} \abs{V^{i,N}_{t}}^{\gamma}}\leq C_{T,\gamma}
\]
which follows easily from the equation for $V^{i,N}_{t}$ \eqref{eq::particles} and from the boundedness of $g$.
\end{proof}


\begin{lem}
\label{lem:FN0L2}
There exists a constant $C$, independent on $N$, such that
\begin{equation}
\sup_{t\in[0,T]}\EE{\int_{\Pi^{d}}F^{N,0}_{t}(x)^{2}\,dx}\leq C.
\end{equation}
\end{lem}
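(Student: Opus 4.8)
The plan is to combine the three preceding lemmas through the inequality established in Lemma~\ref{lem::margmomentsfinal}. That lemma bounds the squared $L^2$-norm of the marginal $F^{N,0}_t$ by two quantities: a velocity moment of order $\frac{3d}{2}$ of the full density, and the fourth power of the $L^4$-norm of the full density. Taking expectations on both sides of \eqref{ineq:marginal_total} reduces the claim to controlling the expectations of these two right-hand-side terms uniformly in $N$ and $t$.

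First I would address the second term, $\EE{\int_{\RR^d}\int_{\Pi^d} F^N_t(x,v)^4\,dx\,dv}$. This is exactly what Lemma~\ref{lem:FNL4} controls: it provides a constant $C$, independent of $N$, such that $\sup_{t\in[0,T]}\EE{\|F^N_t\|_{L^4}^4}\leq C$, valid precisely under the scaling constraint $\beta<\frac{d}{3d+2}$ built into the mollifier definition \eqref{eq:thetarescaled}. Next I would handle the velocity-moment term by applying Lemma~\ref{lem:FNmomentsV} with the choice $\gamma=\frac{3d}{2}$, which yields a constant $C$, independent of $N$, such that $\EE{\sup_{t\in[0,T]}\int_{\RR^d}\int_{\Pi^d}|v|^{\frac{3d}{2}}F^N_t(x,v)\,dx\,dv}\leq C$. (The $\sup$-in-$t$ bound of Lemma~\ref{lem:FNmomentsV} is in fact stronger than the pointwise-in-$t$ bound we need here.)

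Combining these, I would take expectations in \eqref{ineq:marginal_total} and bound the two resulting terms by the constants just obtained:
\[
\EE{\int_{\Pi^d} F^{N,0}_t(x)^2\,dx} \leq K\,\EE{\int_{\RR^d}\int_{\Pi^d} |v|^{\frac{3d}{2}}F^N_t(x,v)\,dx\,dv} + K'\,\EE{\int_{\RR^d}\int_{\Pi^d} F^N_t(x,v)^4\,dx\,dv}.
\]
Taking the supremum over $t\in[0,T]$ on the left, and using that both right-hand terms are bounded by $N$-independent constants (the first uniformly via the $\sup$-in-$t$ estimate, the second via the $\sup$-in-$t$ estimate of Lemma~\ref{lem:FNL4}), delivers the desired uniform bound with $C = K\,C_{3d/2} + K'\,C_{L^4}$.

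This lemma is essentially a corollary, so there is no genuine obstacle: the two hard analytic estimates have already been carried out in Lemmas~\ref{lem:FNL4} and~\ref{lem:FNmomentsV}, and the present statement only assembles them. The one point requiring minor care is matching the exponent $\frac{3d}{2}$ appearing in Lemma~\ref{lem::margmomentsfinal} (produced there by the choice $\alpha=\frac{3d}{2}$) against the free parameter $\gamma$ in Lemma~\ref{lem:FNmomentsV}; since that lemma holds for \emph{all} $\gamma>0$, this causes no difficulty.
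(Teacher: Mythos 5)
Your proposal is correct and follows exactly the paper's own argument: take expectations in the inequality of Lemma~\ref{lem::margmomentsfinal}, then bound the $L^4$ term via Lemma~\ref{lem:FNL4} and the velocity moment (with $\gamma=\tfrac{3d}{2}$) via Lemma~\ref{lem:FNmomentsV}. Your write-up merely spells out the details the paper leaves implicit.
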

\begin{proof}
Taking the expected value both sides of inequality stated in \eqref{ineq:marginal_total} and applying lemma \ref{lem:FNL4} and \ref{lem:FNmomentsV} we conclude the proof.	
\end{proof}
\subsection{Uniform estimates on the velocity field $u^{N}$}
As explained in the previous section, to study the tightness of the law of $u^N_t$ we will approach the vorticity $\omega^N_t = \nabla^{\perp} \cdot u^{N}$. First, to simplify the notation, we call
\[
G^{N}(x) = \frac{1}{N}\sum_{i=1}^{N}g(u^{N}_{\eps_{N}}(X^{i,N}_{t},V^{i,N}_{t})\delta^{\eps_{N}}_{X^{i,N}_{t}}(x),
\]
With this notation equation \eqref{eq::omegaNvorticity} becomes 
\[
\frac{\partial \omega^{N}}{\partial t} = \Delta \omega^{N} - u^{N}\cdot \nabla \omega^{N}  -\nabla^{\bot}\cdot G^{N}.
\]

\begin{lem}
\label{lem:omegaNenergy}
There exists a constant $C$, independent on $N$, such that
\begin{equation}
\EE{\norm{\omega^{N}}_{L^{2}([0,T];W^{1,2}(\Pi^{d}))}} \leq C
\end{equation}
\end{lem}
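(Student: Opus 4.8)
The plan is to run a pathwise energy estimate on the vorticity equation and only afterwards take expectations. The key structural fact is that the Navier--Stokes equation \eqref{eq::NSdiscrete} carries no It\^o term (the noise enters only the particle SDEs), so for $\PP$-almost every realization of the particle trajectories the field $\omega^{N}$ solves a deterministic parabolic equation with a random, time-dependent forcing. I would test
\[
\frac{\partial \omega^{N}}{\partial t} = \Delta \omega^{N} - u^{N}\cdot \nabla \omega^{N} - \nabla^{\perp}\cdot G^{N}
\]
against $\omega^{N}$ and integrate over $\Pi^{d}$, obtaining
\[
\frac{1}{2}\frac{d}{dt}\norm{\omega^{N}_{t}}_{L^{2}}^{2} + \norm{\nabla \omega^{N}_{t}}_{L^{2}}^{2} = \int_{\Pi^{d}} \nabla^{\perp}\omega^{N}_{t}\cdot G^{N}\,dx .
\]
Here the transport term vanishes since $\int_{\Pi^{d}}\omega^{N}(u^{N}\cdot\nabla\omega^{N})\,dx = \frac12\int_{\Pi^{d}} u^{N}\cdot\nabla(\omega^{N})^{2}\,dx = 0$ by $\div(u^{N})=0$, the Laplacian produces the dissipation $-\norm{\nabla\omega^{N}_{t}}_{L^{2}}^{2}$, and the forcing has been integrated by parts to move $\nabla^{\perp}$ onto $\omega^{N}$.

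The forcing term is then handled by Cauchy--Schwarz and Young's inequality,
\[
\abs{\int_{\Pi^{d}} \nabla^{\perp}\omega^{N}_{t}\cdot G^{N}\,dx} \leq \frac{1}{2}\norm{\nabla\omega^{N}_{t}}_{L^{2}}^{2} + \frac{1}{2}\norm{G^{N}}_{L^{2}}^{2},
\]
so the first summand is absorbed by the dissipation. To control $\norm{G^{N}}_{L^{2}}$ uniformly in $N$ I would exploit the boundedness of $g$: since each coefficient is bounded by $\norm{g}_{L^{\infty}}$ and $\theta^{0,\eps_{N}}\geq 0$,
\[
\abs{G^{N}(x)} \leq \norm{g}_{L^{\infty}}\,\frac{1}{N}\sum_{i=1}^{N}\theta^{0,\eps_{N}}(x-X^{i,N}_{t}) = \norm{g}_{L^{\infty}}\,F^{N,0}_{t}(x),
\]
where the last equality holds because $F^{N,0}_{t}(x)=\int_{\RR^{d}}F^{N}_{t}(x,v)\,dv = \frac1N\sum_{i=1}^{N}\theta^{0,\eps_{N}}(x-X^{i,N}_{t})$, the velocity mollifier $\theta^{1,\eps_{N}}$ integrating to one. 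Hence $\norm{G^{N}}_{L^{2}}^{2}\leq \norm{g}_{L^{\infty}}^{2}\norm{F^{N,0}_{t}}_{L^{2}}^{2}$, and Lemma \ref{lem:FN0L2} supplies the decisive uniform bound $\sup_{t}\EE{\norm{F^{N,0}_{t}}_{L^{2}}^{2}}\leq C$.

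Collecting these estimates gives the pathwise differential inequality $\frac{d}{dt}\norm{\omega^{N}_{t}}_{L^{2}}^{2} + \norm{\nabla\omega^{N}_{t}}_{L^{2}}^{2} \leq \norm{g}_{L^{\infty}}^{2}\norm{F^{N,0}_{t}}_{L^{2}}^{2}$, whose integration over $[0,T]$ bounds both $\sup_{t\leq T}\norm{\omega^{N}_{t}}_{L^{2}}^{2}$ and $\int_{0}^{T}\norm{\nabla\omega^{N}_{t}}_{L^{2}}^{2}\,dt$ by $\norm{\omega^{N}_{0}}_{L^{2}}^{2} + \norm{g}_{L^{\infty}}^{2}\int_{0}^{T}\norm{F^{N,0}_{t}}_{L^{2}}^{2}\,dt$; taking expectations and invoking Lemma \ref{lem:FN0L2} yields $\EE{\norm{\omega^{N}}_{L^{2}([0,T];W^{1,2})}^{2}}\leq C$, whence $\EE{\norm{\omega^{N}}_{L^{2}([0,T];W^{1,2})}}\leq C$ by Jensen's inequality. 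The main obstacle is not the algebra of this energy balance but securing the uniform-in-$N$ control of the source, which is precisely what drives the whole chain of a priori bounds (Lemmas \ref{lem::margmomentsfinal}--\ref{lem:FN0L2}) on the marginal $F^{N,0}$; a secondary point requiring care is that the estimate is expressed through $\EE{\norm{\omega^{N}_{0}}_{L^{2}}^{2}}$, so one needs the initial vorticities to be bounded in $L^{2}$ uniformly in $N$ (a regularity assumption on $u^{N}_{0}$ slightly stronger than the weak $L^{2}$ convergence used for the main theorem), and that the integrations by parts must first be justified on a smooth approximation of the solution before passing to the limit.
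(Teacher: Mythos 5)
Your proof is correct and takes essentially the same route as the paper's: a pathwise energy estimate on the vorticity equation, integration by parts on the forcing, the pointwise bound $\abs{G^{N}(x)}\leq \norm{g}_{L^{\infty}}F^{N,0}_{t}(x)$, Young's inequality to absorb the gradient term into the dissipation, and finally expectation combined with Lemma \ref{lem:FN0L2}. Your closing caveat about needing $\norm{\omega^{N}_{0}}_{L^{2}}$ bounded uniformly in $N$ is a fair observation — the paper's proof also keeps $\norm{\omega^{N}_{0}}_{L^{2}}^{2}$ on the right-hand side and takes expectations without commenting on it, so that implicit strengthening of the hypothesis on $u^{N}_{0}$ is present (and unaddressed) there as well.
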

\begin{proof}

Notice that 
\begin{equation}\label{eq::G}
\abs{G^{N}(x)}\leq \norm{g}_{L^{\infty}}F^{N,0}_{t}(x).
\end{equation}
A classical energy approach, applied to the vorticity equation leads to
\[
\norm{\omega^{N}_{t}}_{L^{2}}^{2}+\int_{0}^{t}\norm{\nabla \omega^{N}_{t}}_{L^{2}}^{2}\,dt \leq \norm{\omega^{N}_{0}}_{L^{2}}^{2}+\abs{\int_{0}^{t}\int_{\Pi^{d}}\omega^{N}_{t}(x)\nabla^{\bot}\cdot G^{N}(x)\,dxdt}.
\]
Using the \eqref{eq::G} and integration by parts, we also have
\begin{multline*}
\abs{\int_{\Pi^{d}}\omega^{N}_{t}(x)\nabla^{\bot}\cdot G^{N}(x)\,dx}\leq \int_{\Pi^{d}}\abs{\nabla \omega^{N}_{t}(x)}F^{N,0}_{t}(x)\,dx\leq \\ \frac{1}{2}\norm{\nabla \omega^{N}_{t}}_{L^{2}}^{2}+2\norm{F^{N,0}_{t}}_{L^{2}}^{2}
\end{multline*}
which implies 
\[
\sup_{t\in [0,T]}\norm{\omega^{N}_{t}}_{L^{2}}^{2}+\frac{1}{2}\int_{0}^{T}\norm{\nabla \omega^{N}_{t}}_{L^{2}}^{2}\,dt \leq \norm{\omega^{N}_{0}}_{L^{2}}^{2}+2\int_{0}^{T}\norm{F^{N,0}_{t}}_{L^{2}}^{2}\,dxdt.
\]
The proof ends by taking the expected value both sides and using lemma \ref{lem:FN0L2}.
\end{proof}

\begin{lem}
\label{lem:omegaNW12}
For all $\eps$ there exists $R$ such that 
\[
\PP(\norm{\omega^{N}}_{W^{1,2}([0,T]);H^{-1}(\Pi^{d})} > R) \leq \eps
\]
\end{lem}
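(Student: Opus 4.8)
The plan is to establish a uniform-in-$N$ bound by controlling $\norm{\omega^N}_{W^{1,2}([0,T];H^{-1}(\Pi^2))}$ pathwise by three scalar quantities that are bounded in expectation uniformly in $N$, and then to conclude by a union bound (so as to avoid estimating moments of products). It is important to note that here the vorticity equation is a \emph{pathwise} PDE: the Brownian motions enter only through the particle trajectories determining $G^N$, so $\partial_t\omega^N$ is defined realisation by realisation with no It\^o correction. Writing the Bochner--Sobolev norm as
\[
\norm{\omega^N}_{W^{1,2}([0,T];H^{-1})}^2 = \int_0^T \norm{\omega^N_t}_{H^{-1}}^2\,dt + \int_0^T \norm{\partial_t\omega^N_t}_{H^{-1}}^2\,dt,
\]
I would bound the first summand by $\int_0^T\norm{\omega^N_t}_{L^2}^2\,dt$ using $L^2\hookrightarrow H^{-1}$, and split the second via $\partial_t\omega^N = \Delta\omega^N - u^N\cdot\nabla\omega^N - \nabla^\perp\cdot G^N$ into a diffusion, an advection and a forcing contribution.

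The diffusion term is immediate, since $\Delta\omega^N=\div(\nabla\omega^N)$ gives $\norm{\Delta\omega^N_t}_{H^{-1}}\le\norm{\nabla\omega^N_t}_{L^2}\le\norm{\omega^N_t}_{W^{1,2}}$. For the forcing term I would use $\norm{\nabla^\perp\cdot G^N}_{H^{-1}}\le\norm{G^N}_{L^2}$ together with the pointwise bound \eqref{eq::G}, i.e. $\norm{G^N}_{L^2}\le\norm{g}_{L^\infty}\norm{F^{N,0}_t}_{L^2}$. The crux is the advection term. Exploiting $\div u^N=0$ I would write $u^N\cdot\nabla\omega^N=\div(u^N\omega^N)$, so that $\norm{u^N\cdot\nabla\omega^N}_{H^{-1}}\le\norm{u^N\omega^N}_{L^2}\le\norm{u^N}_{L^4}\norm{\omega^N}_{L^4}$. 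In dimension two the Ladyzhenskaya (Gagliardo--Nirenberg) inequality gives $\norm{f}_{L^4}^2\le C\norm{f}_{L^2}\norm{f}_{H^1}$; combined with the Biot--Savart identity $\norm{\nabla u^N}_{L^2}=\norm{\omega^N}_{L^2}$ and the Poincar\'e-type estimate $\norm{u^N_t}_{H^1}\le C(1+\norm{\omega^N_t}_{L^2})$ (the additive constant absorbing the spatial mean of $u^N$, whose time derivative is bounded by $\norm{g}_{L^\infty}$ on the torus), this yields the pointwise-in-time bound $\norm{u^N\omega^N}_{L^2}^2\le C(1+\norm{\omega^N_t}_{L^2})^2\norm{\omega^N_t}_{L^2}\norm{\omega^N_t}_{W^{1,2}}$. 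Setting $M=\sup_{t}\norm{\omega^N_t}_{L^2}$ and $Q=\int_0^T\norm{\omega^N_t}_{W^{1,2}}^2\,dt$ and integrating in time with Cauchy--Schwarz, the advection contribution is bounded by $C\,T^{1/2}(1+M)^2M\,Q^{1/2}$, a continuous increasing function of $(M,Q)$ alone.

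Collecting the three bounds, there is a deterministic function $\Phi$, increasing in each argument, such that pathwise
\[
\norm{\omega^N}_{W^{1,2}([0,T];H^{-1})}^2 \le \Phi\!\left(M,\;Q,\;P\right),\qquad P:=\int_0^T\norm{F^{N,0}_t}_{L^2}^2\,dt.
\]
By the estimate established in the proof of Lemma \ref{lem:omegaNenergy} one has $\EE{M^2}\le C$ and $\EE{Q}\le C$, while Lemma \ref{lem:FN0L2} gives $\EE{P}\le C$, all uniformly in $N$. Given $\eps>0$ I would, by Markov's inequality, choose thresholds $R_1,R_2,R_3$ with $\PP(M^2>R_1)$, $\PP(Q>R_2)$ and $\PP(P>R_3)$ each at most $\eps/3$ uniformly in $N$; on the complementary event all three quantities are bounded, hence $\norm{\omega^N}_{W^{1,2}([0,T];H^{-1})}^2\le\Phi(R_1^{1/2},R_2,R_3)=:R^2$. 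A union bound then gives $\PP(\norm{\omega^N}_{W^{1,2}([0,T];H^{-1})}>R)\le\eps$ for every $N$, which is the claim.

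The main obstacle is the advection term: one must use the two-dimensional interpolation together with the Biot--Savart relation to land the quadratic nonlinearity in $H^{-1}$, and then verify that the resulting product is still integrable in time. This is precisely why the $\sup_t$ bound on $\norm{\omega^N_t}_{L^2}$ (and not merely its $L^2_t$ bound) coming from the energy method in Lemma \ref{lem:omegaNenergy} is needed; pulling that factor out as the constant $M$ is what makes the time integral $\int_0^T\norm{\omega^N_t}_{W^{1,2}}\,dt$ controllable by $Q^{1/2}$ and keeps $\Phi$ a function of the three a priori controlled quantities only, so that the final union bound is uniform in $N$.
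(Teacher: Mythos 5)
Your proof is correct and follows the same overall strategy as the paper: decompose $\partial_t\omega^N=\Delta\omega^N-u^N\cdot\nabla\omega^N-\nabla^{\bot}\cdot G^N$, bound the diffusion term by $\norm{\nabla\omega^N}_{L^2}$ and the forcing term by $\norm{g}_{\infty}\norm{F^{N,0}_t}_{L^2}$ in $H^{-1}$, and conclude by Markov's inequality plus a union bound, so that only expectations of the individual a priori quantities (supplied by the proofs of Lemmas \ref{lem:omegaNenergy} and \ref{lem:FN0L2}) are needed, never moments of products. The only genuine divergence is the advection term: the paper writes $\norm{u^N\cdot\nabla\omega^N}_{H^{-1}}\le\norm{u^N}_{\infty}\norm{\omega^N}_{L^2}$ and controls $\int_0^T\norm{u^N_s}_{\infty}^2\,ds$ through the Sobolev embedding $W^{2,2}(\Pi^2)\hookrightarrow C(\Pi^2)$, then splits the product $\sup_t\norm{\omega^N_t}_{L^2}^2\int_0^T\norm{u^N_s}_{\infty}^2\,ds$ over two events exactly in your spirit; you instead use H\"older in $L^4\times L^4$, Ladyzhenskaya's interpolation and Biot--Savart, landing on $M=\sup_t\norm{\omega^N_t}_{L^2}$ and $Q=\int_0^T\norm{\omega^N_t}_{W^{1,2}}^2\,dt$ --- the same two quantities the paper's route consumes, since its $L^\infty$ bound also rests on $\norm{\omega^N}_{W^{1,2}}$ via elliptic regularity. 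The two arguments are therefore interchangeable: yours avoids invoking second-order regularity for $u^N$, at the price of a messier polynomial bound $\Phi$. A merit of your write-up is that you treat the spatial mean of $u^N$ on the torus explicitly (its time derivative is bounded by $\norm{g}_{L^\infty}$, plus a bound on the initial mean), a point that is equally necessary for the paper's step $\norm{u^N}_{W^{2,2}}\lesssim\norm{\omega^N}_{W^{1,2}}+\abs{\int_{\Pi^d} u^N}$ but is passed over silently there; both arguments also share the implicit assumption that $\EE{\norm{\omega^N_0}_{L^2}^2}$ is bounded uniformly in $N$, which is what makes the energy estimate behind Lemma \ref{lem:omegaNenergy} quantitative.
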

\begin{proof}
By lemma \ref{lem:omegaNenergy} we already know that $\omega^{N}$ is uniformly bounded in \\ $L^{2}([0,T];W^{1,2}(\Pi^{d}))$. We have
\[
\norm{\frac{\partial \omega^{N}}{\partial t}}_{H^{-1}}\leq \norm{\Delta \omega^{N}}_{H^{-1}} + \norm{u^{N}\cdot \nabla \omega^{N}}_{H^{-1}} + \norm{g}_{\infty} \norm{\nabla^{\bot}\cdot F^{N,0}}_{H^{-1}}
\]
\[
\leq \norm{\nabla \omega^{N}}_{L^{2}} + \norm{\omega^{N}}_{L^{2}}^{2}\norm{u^{N}}_{\infty}^{2}+\norm{F^{N,0}}_{L^{2}}
\]
by classical arguments and integration by parts. To obtain the desired result is enough to note that 
\begin{multline*}
\PP \left( \sup_{t\in[0,T]}\norm{\omega^{N}}_{L^{2}}^{2}\int_{0}^{T}\norm{u^{N}_{s}}_{\infty}^{2}\,ds > \frac{R}{3} \right) \leq \PP \left( \sup_{t\in[0,T]}\norm{\omega^{N}}_{L^{2}}^{2} > \sqrt{\frac{R}{3}} \right)+\\ \PP \left( \int_{0}^{T}\norm{u^{N}_{s}}_{\infty}^{2}\,ds > \sqrt{\frac{R}{3}} \right) < 2\eps
\end{multline*}
by Sobolev embedding of $W^{2,2}(\Pi^{2})$ into $C(\Pi^{2})$, lemma \ref{lem:FN0L2} and \ref{lem:omegaNenergy}.\end{proof}

\section{Tightness of $(u^{N},S^{N})$ and Passage to the limit}

\begin{lem}
\label{lem:PretightSN}
For all $p > 0$ there exist $\beta,C> 0$, independently on $N$, such that the following conditions hold
\[
\EE{\sup_{t\in[0,T]}\int_{\RR^{d}}\int_{\Pi^{d}}\left(\abs{x}+\abs{v}\right)S^{N}_{t}(dx,dv)}\leq C,
\]
\[
\EE{\mathcal{W}_{1}(S^{N}_{t},S^{N}_{s})^{p} }\leq C\abs{t-s}^{1+\beta}.
\]
\end{lem}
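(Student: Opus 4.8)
The plan is to prove the two estimates separately, treating the first moment bound as a preliminary input for the Wasserstein time-increment estimate. For the first inequality, I would expand the empirical measure explicitly and observe that
\[
\int_{\RR^{d}}\int_{\Pi^{d}}\left(\abs{x}+\abs{v}\right)S^{N}_{t}(dx,dv)=\frac{1}{N}\sum_{i=1}^{N}\left(\abs{X^{i,N}_{t}}+\abs{V^{i,N}_{t}}\right).
\]
Since positions live on the torus $\Pi^{d}$, the term $\abs{X^{i,N}_{t}}$ is bounded by a universal constant (the torus diameter), so only the velocity part requires work. Taking $\sup_{t}$ inside the sum and then the expectation, it suffices to bound $\EE{\sup_{t\in[0,T]}\abs{V^{i,N}_{t}}}$ uniformly in $i$ and $N$. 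This is exactly the type of estimate already established in the proof of Lemma \ref{lem:FNmomentsV}, where the boundedness of $g$ combined with the velocity SDE \eqref{eq::particles} gives $\EE{\sup_{t\in[0,T]}\abs{V^{i,N}_{t}}^{\gamma}}\leq C_{T,\gamma}$. Concretely, from $V^{i,N}_{t}=V^{i,N}_{0}+\int_{0}^{t}g(\cdots)\,ds+\sigma W^{i}_{t}$ one has $\abs{V^{i,N}_{t}}\leq\abs{V^{i,N}_{0}}+\norm{g}_{\infty}T+\sigma\sup_{t}\abs{W^{i}_{t}}$, and taking expectations of the supremum uses the integrability of $F_{0}$ for the initial data and Doob/BDG for the Brownian term.

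For the second inequality I would use the Kantorovich--Rubinstein duality, which bounds $\mathcal{W}_{1}(S^{N}_{t},S^{N}_{s})$ by the supremum over $1$-Lipschitz test functions of $\langle S^{N}_{t}-S^{N}_{s},\phi\rangle$. The cleanest route is to observe that the $1$-Lipschitz coupling given by keeping each particle's index fixed yields
\[
\mathcal{W}_{1}(S^{N}_{t},S^{N}_{s})\leq\frac{1}{N}\sum_{i=1}^{N}\left(\abs{X^{i,N}_{t}-X^{i,N}_{s}}+\abs{V^{i,N}_{t}-V^{i,N}_{s}}\right),
\]
reducing the problem to pathwise increments of the individual particle trajectories. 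For the position increment, $X^{i,N}_{t}-X^{i,N}_{s}=\int_{s}^{t}V^{i,N}_{r}\,dr$, so $\abs{X^{i,N}_{t}-X^{i,N}_{s}}\leq\abs{t-s}\sup_{r}\abs{V^{i,N}_{r}}$. For the velocity increment, $V^{i,N}_{t}-V^{i,N}_{s}=\int_{s}^{t}g(\cdots)\,dr+\sigma(W^{i}_{t}-W^{i}_{s})$; the drift contributes at most $\norm{g}_{\infty}\abs{t-s}$ and the martingale part is controlled by the Burkholder--Davis--Gundy inequality, which for the $p$-th moment gives $\EE{\abs{W^{i}_{t}-W^{i}_{s}}^{p}}\leq C_{p}\abs{t-s}^{p/2}$.

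Assembling these, I would raise the coupling bound to the $p$-th power, apply Jensen (or the triangle inequality in $L^{p}$) to handle the average over $i$, and take expectations. The deterministic drift and position terms contribute powers $\abs{t-s}^{p}$, while the Brownian term contributes $\abs{t-s}^{p/2}$; the latter is the dominant (smallest exponent) contribution for small increments. To obtain the form $\abs{t-s}^{1+\beta}$ required by the Kolmogorov--Centsov continuity criterion, one chooses $p$ large enough that $p/2>1$, i.e.\ $p>2$, so that $\beta=p/2-1>0$; this is where the freedom in choosing $p>0$ in the statement is used, and why the constants $\beta,C$ depend on $p$. The main obstacle, though a mild one, is verifying that the index-preserving coupling is admissible and that the moment bounds survive raising to arbitrary power $p$; both reduce to the uniform velocity-moment control $\EE{\sup_{t}\abs{V^{i,N}_{t}}^{p}}\leq C$, which again follows from the boundedness of $g$ and BDG exactly as in Lemma \ref{lem:FNmomentsV}. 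I expect no genuine difficulty beyond bookkeeping, since the saturation (boundedness) of $g$ completely tames the drift, as the authors repeatedly emphasize.
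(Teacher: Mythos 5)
Your proof is correct and is essentially the paper's own argument spelled out in full: the paper's entire proof is the one-line remark that the result ``follows easily from the boundedness of the coefficient'' in \eqref{eq::particles}, and your details (torus boundedness for $x$, the synchronous index-preserving coupling for $\mathcal{W}_1$, bounded drift plus Burkholder--Davis--Gundy for the Brownian increments) are the standard way to make that precise. The only point worth flagging is that your argument genuinely requires $p>2$ to obtain $\beta = p/2-1>0$ (which is all the Kolmogorov tightness criterion needs), and this is not a gap on your side but a correction to the lemma's phrasing: for $N=1$ and $p\le 2$ the Brownian velocity increment alone gives $\mathbb{E}\bigl[\mathcal{W}_1(S^1_t,S^1_s)^p\bigr]$ of exact order $|t-s|^{p/2}$, so the stated ``for all $p>0$'' cannot hold with constants uniform in $N$.
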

\begin{proof}
The result follows easily from the boundedness of the coefficient in equation \eqref{eq::particles}. 
\end{proof}
\begin{teo}
\label{teo::tightSN}
Under assumption of section \ref{sec::intro} the family of laws $\{Q^{N,S}\}_{N\in\NN}$ of the empirical measure $\{S^{N}_{\cdot}\}_{N\in\NN}$ is relatively compact with respect of the weak convergence on $C\left([0,T];\PP_{{r}_{1}}(\Pi^{d}\times \RR^{d})\right)$.
\end{teo}
\begin{proof}
From the estimates in lemma \ref{lem:PretightSN}, with relatively classical compactness theorems, one can prove the tightness of the family of $\{Q^{N,S}\}_{N\in\NN}$. Relative compactness of measures then follows by Prohorov Theorem.
\end{proof}

\begin{teo}\label{teo::tightuN}
Under assumption of section \ref{sec::intro} the family of laws $\{Q^{N,u}\}_{N\in\NN}$ of $\{u^{N}\}_{N\in\NN}$, is relatively compact as a probability measure on $L^{2}([0,T];C(\Pi^{d}))$.
\end{teo}
\begin{proof}
For $M,R > 0 $ introduce
\begin{multline*}
K_{M,R} = \{ u \in L^{2}([0,T];C(\Pi^{d})) \,|\, \norm{u}_{L^{2}([0,T];W^{2,2}(\Pi^{d}))} \leq M, \\ \norm{u}_{W^{1,2}([0,T];L^{2}(\Pi^{d}))} \leq R \}
\end{multline*}
which by the Aubin-Lions Lemma is relatively compact in $L^{2}([0,T];C(\Pi^{d}))$. Notice that
\begin{multline*}
Q^{N,u}(K^{c}_{M,R}) = \PP(u^{N}\in K^{c}_{M,R}) 
\leq \\ \PP(\norm{u^{N}}_{L^{2}([0,T];W^{2,2}(\Pi^{d}))} > M)+
\PP(   \norm{u^{N}}_{W^{1,2}([0,T];L^{2}(\Pi^{d}))} > R )\leq 
\\
\leq \frac{\EE {\norm{u^{N}}_{L^{2}([0,T];W^{2,2}(\Pi^{d}))}  }}{M} + \eps
\end{multline*}
by lemma \ref{lem:omegaNW12}. By lemma \ref{lem:omegaNenergy} the expected values on the r.h.s. is uniformly bounded with respect to $N$, hence the sequence $\{Q^{N,u}\}_{N\in\NN}$ is tight and proof is concluded. 

\end{proof}

\noindent As a consequence of theorems \ref{teo::tightuN} \ref{teo::tightSN}, we can conclude that from each subsequence of laws $Q^N$ we can extract a further subsequence $Q^{N_k}:= Q^{N_k,S} \times Q^{N_k,u}$ that converges to a measure $Q$ on $L^2([0,T];C(\Pi^d))\times C([0,T];\PP_{r_{1}}\left( \Pi^{d}\times\RR^{d}\right))$. In  theorem \ref{teo::limit}, we prove that the limit measure $Q$ is supported on the weak solutions, as defined below of the coupled system \eqref{eq::NS-FP}, but we can not conclude that 
the full sequence of laws $Q^{N}$ converges to the solutions of the PDE system, because we are not able to prove uniqueness in the class of weak solutions.

\begin{defi}\label{defi::weaksolution}
A weak solution of the system composed by Navier-Stokes equation and the Fokker-Planck equation is a couple $(u,F)$ where $u \in L^2([0,T];C(\Pi^d))$ and $F \in C([0,T];\PP_{r_1}(\Pi^d\times \RR^d))$ such that for each divergence free smooth fields $\phi:[0,T]\times\Pi\to\RR^{d}$, and for each smooth compact support function
$\psi:[0,T]\times \Pi^d\times \RR^{d}\to\RR$
\begin{eqnarray*}
\langle u_t,\phi_t\rangle=\langle u_0,\phi_0\rangle+\int_0^t \langle u_s,\frac{\partial\phi_s}{\partial s}\rangle ds +\int_0^t\langle u_s,\Delta \phi_s\rangle ds+\int_0^t \langle u_s\cdot\nabla \phi_s,u_s\rangle ds\\
+\int_0^t\langle\pi_s, \nabla \phi_s\rangle ds
-\int_0^t\int_{\Pi^d}\int_{\mathbb{R}^d}\phi_s(x)g(u_s(x),v)F_s(dx,dv)ds, 
\end{eqnarray*}
\begin{eqnarray}\label{eq::weakNS-FP}
\langle F_t,\psi_t\rangle=\langle F_0,\psi_0\rangle+\int_{0}^{t}\langle F_{s},\frac{\partial \psi_{s}}{\partial s} \rangle ds +\int_0^t \langle F_s,\Delta_v \psi_s\rangle ds+\nonumber\\ \int_0^t\langle F_s,v\cdot \nabla_x \psi_s\rangle ds+ 
\int_0^t \langle F_s,g(u_s,v)\cdot \nabla _v \psi_s\rangle ds.\nonumber \\
\end{eqnarray}
\end{defi}

\begin{teo}\label{teo::limit}
Under the assumption of section \ref{sec::intro}, Q is supported on the weak solutions $(\ref{eq::weakNS-FP})$ of the system \eqref{eq::NS-FP}
\end{teo}
\begin{proof} 
\noindent {\bf Step 1.} We introduce two functionals $\Phi_\phi, \Psi_\psi$, dictated by the weak solutions  $(\ref{eq::weakNS-FP})$. We denote with $H$ the space $L^2([0,T];C(\Pi^d))\times C([0,T];\PP_{r_{1}}\left( \Pi^{d}\times\RR^{d}\right))$. The first $\Phi_\phi$ is defined for each free divergence smooth field $\phi$:
\[\Phi_\phi:H\to\RR\]
\begin{multline*}
\Phi_\phi(u,F)=\langle u_T,\phi_T\rangle-\langle u_0,\phi_0\rangle-\int_0^T \langle u_s,\frac{\partial\phi_s}{\partial s}\rangle ds -\int_0^T\langle u_s,\Delta \phi_s\rangle ds\\-\int_0^T \langle u_s\cdot\nabla \phi_s,u_s\rangle ds
-\int_0^T\langle  \pi_s,\nabla\phi_s\rangle ds
+\int_0^T\int_{\Pi^d}\int_{\mathbb{R}^d}\phi_s(x)g(u_s(x),v)F_s(dx,dv)ds\, .\\
\end{multline*}
The second functional $\Psi_\psi$ is defined for each smooth function $\psi$:
\[\Psi_\psi:H\to\RR\]
\begin{multline*}
\Psi_\psi(u,F)=\langle F_T,\psi_T\rangle-\langle F_0,\psi_0\rangle-\int_{0}^{t}\langle F_{s},\frac{\partial \psi_{s}}{\partial s} \rangle ds-\int_0^T\langle F_s,\Delta_v \psi_s\rangle ds\\-\int_0^T\langle F_s,v\cdot \nabla_x \psi\rangle ds 
-\int_0^T \langle F_s,g(u_s,v)\cdot \nabla _v \psi\rangle ds\,.
\end{multline*}
Notice that the functionals are continuous on 
$L^2([0,T];C(\Pi^d))\times C([0,T];\PP_{r_{1}}\left( \Pi^{d}\times\RR^{d}\right))$ endowed respectively with the strong topology and the weak topology of measures. Continuity of both functionals is easily checked due to uniform convergence. Now, we consider 
$Q^{N}$, the product measure, between the law of $u^{N}_{t}$ and $S^{N}_{t}$ $Q^{N,u}\otimes Q^{N,S}$. By theorems $(\ref{teo::tightSN})$ and $(\ref{teo::tightuN})$, there exists a subsequence $N_{k}$ such that $Q^{N_{k}}$ weakly converges to $Q$. By Portmanteau theorem, we get:

\begin{multline*}
Q\left((u,F)\in H : \abs{\Phi_\phi(u,F)}+\abs{\Psi_\psi(u,F)}> \delta  \right)\leq \\\liminf_{k\to\infty} Q^{N_{k}}\left((u,F)\in H : \abs{\Phi_\phi(u,F)}+\abs{\Psi_\psi(u,F)}>\delta\right)\\
=\liminf_{k\to\infty} \PP\left(\abs{\Phi_\phi(u^{N_{k}},S^{N_{k}})}+\abs{\Psi_\psi(u^{N_{k}},S^{N_{k}})}>\delta\right).
\end{multline*}


To prove that $Q\left((u,F)\in H : \abs{\Phi_\phi(u,F)}+\abs{\Psi_\psi(u,F)}>\delta\right)=0$, we prove in Step 2 that the r.h.s is zero for each $\delta >0$. 
By a classical density argument we can conclude that:
$$Q\left((u,F)\in H : \abs{\Phi_\phi(u,F)}+\abs{\Psi_\psi(u,F)}=0 \quad \forall \phi,\forall\psi\right)=1.$$ 

\noindent {\bf Step 2:} It remains to prove that 
\[\liminf_{k\to\infty} \PP\left(\abs{\Phi_\phi(u^{N_{k}},S^{N_{k}})}+\abs{\Psi_\psi(u^{N_{k}},S^{N_{k}})}>\delta\right)=0.\]
From Chebyschev inequality, we get that 
\[\PP\left(\abs{\Phi_\phi(u^{N_{k}},S^{N_{k}})}+\abs{\Psi_\psi(u^{N_{k}},S^{N_{k}})}>\delta\right)\leq \frac{\EE{  \abs{\Phi_\phi(u^{N_{k}},S^{N_{k}})}+\abs{\Psi_\psi(u^{N_{k}},S^{N_{k}})}}}{\delta}.\]
\noindent For simplicity we will write $N$ instead of $N_{k}$. Now, we focus on the first term of the sum. By the weak formulation of Navier-Stokes for the microscopic model we get:
\begin{eqnarray*}
\Phi_\phi(u^N,S^N)=\langle u^N_0-u_0,\phi_0\rangle+\int_0^T\int_{\Pi^d}\phi_s(x)\frac{1}{N}\sum_{i=1}^N g(u^N_{\eps_N}(X^{i,N}_s),V^{i,N}_s)\delta^{\eps_{N}}_{X^{i,N}_t}(x) dxds\\
 -\int_0^T\int_{\Pi^d}\int_{\mathbb{R}^d}\phi_s(x)g(u^N_s(x),v)S^N_s(dx,dv)ds.
\end{eqnarray*}
By some calculations, one can check that:
\begin{multline*}
\int_0^T\int_{\Pi^d}\phi_s(x)\frac{1}{N}\sum_{i=1}^N g(u^N_{\eps_N}(X^{i,N}_s),V^{i,N}_s)\delta^{\eps_{N}}_{X^{i,N}_t}(x) dxds=\\ \int_0^T\int_{\Pi^d}\int_{\mathbb{R}^d}g(u^N_{\eps_N}(x),v)\phi^{\eps_N}_s(x)S^N_{s}(dx,dv)ds.
\end{multline*}
Where $\phi^{\eps_{N}}=\theta^{\eps_{N}}*\phi$. Now, we can estimate the remaining term:
\begin{multline*}
\EE{ \left|\Phi_\phi(u^N,S^N)\right|}\leq \\
\EE{\left|\int_0^T\int_{\Pi^d}\int_{\mathbb{R}^d}\left(\phi^{\eps_N}_s(x)g(u^N_{\eps_N}(x),v))-\phi_s(x)g(u^N_s(x),v)\right)S^N_s(dx,dv)ds\right|}+\\
+ \EE{\langle u^N_0-u_0,\phi_0\rangle}.
\end{multline*}
For the hypothesis on initial condition, the second term is infinitesimal. Let's estimate the first summand.
\begin{multline*}
\mathbb{E}\bigg[\int_0^T\int_{\Pi^d}\int_{\mathbb{R}^d}\left|g(u^N_{\eps_N}(x),v))||\phi^{\eps_N}_s(x)-\phi_s(x)\right|+\\
 \left|\phi_s(x)||g(u^N_{\eps_N}(x),v)-g(u^N_s(x),v)\right|S^N_s(dx,dv)ds\bigg].
\end{multline*}

The first summand tends to zero, because of the boundness of $g$ and because $\phi^{\eps_N}\to \phi$ uniformly. Regarding the second summand, for the lipschtzianity of g, we have:
\[\abs{g(u^N_{\eps_N}(x),v)-g(u^N_s(x),v)}\leq C\abs{u^N_{\eps_N}(x)-u^N_s(x)}. \]
Notice that $\abs{u^N_{\eps_N}-u^N_s}$ tends to zero uniformly, as one can see from the following argument:
\begin{multline*}
\norm{u^N_{\eps_N}-u^N_s}_\infty=\sup_x\abs{\int_{\Pi^{d}} \theta^{\eps_{N}}(x-x')u^{N}(x')dx'-u^{N}(x)}=\\
\sup_x\abs{\int_{\Pi^{d}} \theta^{\eps_{N}}(x-x')\left(u^{N}(x')-u^{N}(x)\right)dx'}\\
\leq C\sup_x\int_{\Pi^{d}} \theta^{\eps_{N}}(x-x')\abs{x-x'}dx'\to 0.
\end{multline*}
Then the first term of the sum, $\EE{\Phi_{\phi}(u^{N},S^{N})}$, tends to zero. Regarding the second term, recalling the identity satisfied by the empirical measure, see $(\ref{eq::empiricalmeasure})$:
\begin{multline*}
\Psi_\psi(u^N,S^N)=\langle S^N_0-F_0,\psi_0\rangle+\int_0^T \langle g(u^{N}_{\eps}(x),v)\cdot \nabla_{v}\psi , S^{N}_{s}\rangle ds-\\
\int_0^T \langle g(u^{N}_s(x),v)\cdot \nabla _v \psi ,S^{N}_s\rangle ds+
+M^{N,\psi}_T.
\end{multline*}
Then
\begin{multline*}
\EE{\left|\Psi_\psi(u^N,S^N)\right|}\leq \\
\leq \EE{\left|\int_0^T \langle g(u^{N}_{\eps}(x),v)\cdot \nabla_{v}\psi , S^{N}_{s}\rangle ds- \int_0^T \langle g(u^{N}_s(x),v)\cdot \nabla _v \psi,S^{N}_s\rangle ds\right|}+\\
+\EE{ \langle S^N_0-F_0,\psi_0\rangle}+\EE{\abs{M^{N,\psi}_T}}. 
\end{multline*}
By the assumptions on the initial conditions $F^N_0$ and by standard argument the second and the third term are infinitesimal. Regarding the first term:
Following exactly the previous argument, it tends to zero.
\end{proof}

\noindent Combining Theorem \ref{teo::tightuN}, \ref{teo::tightSN} and \ref{teo::limit} we have proved Theorem \ref{teo::main}.


\begin{thebibliography}{99}                                                                                               %
\bibitem {Allaire}G. Allaire, Homogenization of the Navier-Stokes equations in
open sets perforated with tiny holes, \textit{Arch. Ration. Mech. Anal.}
\textbf{113} (1991), 209-259.

\bibitem {Bernard}E. Bernard, L. Desvillettes, F. Golse, V. Ricci, A
derivation of the Vlasov-Navier-Stokes model for aerosol flows from kinetic
theory, arXiv preprint 2016, n. 1608-00422.


\bibitem {Boud}L. Boudin, L. Desvillettes, C. Grandmont, A. Moussa, Global
Existence of Solutions for the Coupled Vlasov and Navier-Stokes Equations,
\textit{Diff. Int. Eq.} \textbf{22} (2009), n.11-12, 1247-1271.

\bibitem {Dejard}B. Desjardins, M.J. Esteban, Existence of weak solutions for
the motion of rigid bodies in a viscous fluid, \textit{Arch. Ration. Mech.
Anal.} \textbf{146} (1999), 59-71.

\bibitem {Desvill}L. Desvillettes, F. Golse, V. Ricci, The mean-field limit
for solid particles in a Navier-Stokes flow, \textit{J. Stat. Phys.}
\textbf{131} (2008), 941-967.

\bibitem {Desv PDE}L. Desvillettes, J. Mathiaud, Some aspects of the
asymptotics leading from gas-particles equations towards multiphase flows
equations, \textit{J. Stat. Phys.} \textbf{141} (2010), 120-141.

\bibitem {Fer}E. Feireisl, Y. Namlyeyeva, \v{S}. Ne\v{c}asov\'{a},
Homogenization of the evolutionary Navier--Stokes system, \textit{Manuscr.
Math.} \textbf{149} (2016), n. 1, 251-274.

\bibitem {Fla}F. Flandoli, A fluid-particle system related to Vlasov-Navier-Stokes equations, to appear in Lecture Notes RIMS Kyoto, Ed. Y. Maekawa. 

\bibitem {gerard}D. G\'{e}rard-Varet, M. Hillairet, Regularity issues in the
problem of fluid structure interaction, \textit{Arch. Ration. Mech. Anal.}
\textbf{195} (2010), 375-407.

\bibitem {Goudon1}T. Goudon, P.-E. Jabin, A. Vasseur, Hydrodynamic limit for
the Vlasov-Navier-Stokes equations. I. Light particles regime, \textit{Indiana
Univ. Math.} \textit{J.} \textbf{53} (2004), 1495-1515.

\bibitem {Goudon2}T. Goudon, P.-E. Jabin, A. Vasseur, Hydrodynamic limit for
the Vlasov-Navier-Stokes equations.II. Fine particles regime, \textit{Indiana
Univ. Math. J.} \textbf{53} (2004), 1517--1536.




\bibitem {Jabin}P.-E. Jabin, F. Otto, Identification of the dilute regime in
particle sedimentation, \textit{Comm. Math. Phys.} \textbf{250} (2004), 415-432.

\bibitem {Sueur}O. Glass, A. Munnier, F. Sueur, Point vortex dynamics as
zero-radius limit of the motion of a rigid body in an irrotational fluid,
preprint hal.inria.fr 2016.


\bibitem {Yu}C. Yu, Global weak solutions to the incompressible
Navier-Stokes-Vlasov equations, \textit{J. Math. Pures Appl.} \textbf{100}
(2013), 275-293.
\end{thebibliography}
\end{document}